\let\origsection=\section \def\section{\@ifstar{\origsection*}{\mysection}} 
\def\mysection{\@startsection{section}{1}\z@{.7\linespacing\@plus\linespacing}{.5\linespacing}{\normalfont\scshape\centering\S}}
\renewcommand{\PrintDOI}[1]{\doi{#1}}
\numberwithin{equation}{section}
\theoremstyle{plain}
\newtheorem{thm}{Theorem}[section]
\newtheorem{lemma}[thm]{Lemma}
\theoremstyle{definition}
\newtheorem{dfn}[thm]{Definition}
\newtheorem{quest}[thm]{Question}
\newtheorem{rem}[thm]{Remark}
\let\eps=\varepsilon
\let\theta=\vartheta
\let\rho=\varrho
\let\phi=\varphi
\let\lra=\longrightarrow
\def\st{\,|\,}
\def\bl{\bigl(}
\def\br{\bigr)}
\begin{document}
\title[Counting odd cycles]{Counting odd cycles in locally dense graphs}
\author{Christian Reiher}
\address{Fachbereich Mathematik, Universit\"at Hamburg,
   D-20146 Hamburg, Germany}
\email{Christian.Reiher@uni-hamburg.de}

\keywords{$(\eps, d)$-dense graphs, odd cycles, graph homomorphisms, Sidorenko's conjecture}

\begin{abstract}
We prove that for any given $\eps>0$ and $d\in[0,1]$, every sufficiently large 
\hbox{$(\eps, d)$-dense} graph $G$ contains for each odd integer $r$ at least 
$(d^r-\eps)|V(G)|^r$ cycles of length~$r$. Here, $G$ being $(\eps, d)$-dense means that 
every set $X$ containing at least~$\eps\,|V(G)|$ vertices spans at least 
$\tfrac d2\, |X|^2$ edges, and what we really count is the number of 
homomorphisms from an $r$-cycle into $G$. 

The result adresses a question of {\sc Y. Kohayakawa}, {\sc B. Nagle}, {\sc V. R\"{o}dl}, 
and {\sc M.~Schacht}. 
\end{abstract}

\maketitle

\section{Introduction}\label{sec:intro}

A graph $G$ is said to be $d$-quasirandom for some real number $d\in[0, 1]$ 
if each subset~$X$ of the set $V$ of its vertices spans $\tfrac d2\,|X|^2+o(|V|^2)$ edges. 
A part of the reason as to why this concept is a useful one is that 
it is known that into any such graph there are $\bl d^{|E(H)|}+o_H(1)\br|V(G)|^{|V(H)|}$ 
homomorphisms  from any fixed graph $H$. 

Recently {\sc Y. Kohayakawa}, {\sc B. Nagle}, {\sc V. R\"{o}dl}, and 
{\sc M. Schacht}~\cite{KNRS} asked whether a certain variant of this implication, 
which would in some situations be stronger, is also true. 
Namely, if we demand from $G$ only that any $X$ as above spans at least 
$\tfrac d2\,|X|^2+o(|V|^2)$ edges, so that locally $G$ 
may have far more edges than a quasirandom graph would have, does it then still follow 
that one has at least $(d^{|E(H)|}+o_H(1))|V(G)|^{|V(H)|}$ homomorphisms from $H$ into $G$?

To get these ideas more precise one may make the following

\begin{dfn}  
Let $\eps\in(0, 1)$ and $d\in [0, 1]$ be given. A graph $G$ on $n$ vertices is said 
to be {\it $(\eps, d)$-dense} if each $X\subseteq V(G)$ with $|X|\ge\eps n$ spans 
at least $\tfrac d2\,|X|^2$ edges.  
\end{dfn}

Then what was asked in~\cite{KNRS} is this:

\begin{quest}\label{Frage}
For which graphs $H$ is it true that for each $\delta>0$ there exists an $\eps>0$ 
such that there are at least $(d^{|E(H)|}-\delta)|V(G)|^{|V(H)|}$ homomorphisms 
of $H$ into any sufficiently large graph $G$ that happens to be $(\eps, d)$-dense 
for some real $d$?
\end{quest}

It was observed in~\cite{KNRS} that the answer is affirmative if $H$ is a clique, 
a complete multipartite graph, the line graph of a boolean cube, or a bipartite graph 
that satisfies {\sc Sidorenko's} conjecture \cite{Sid}, which was formulated 
independently by {\sc Erd\H{o}s} and {\sc Simonovits} (see e.g.~\cite{Sim}). 
The last of these classes is known to contain all even cycles, and the authors 
of~\cite{KNRS} wondered explicitly about the case of odd cycles. 
The main result of this article addresses this problem. In order to be able to 
state it more briefly, we introduce the following

\begin{dfn}
Given a graph $G$ and an integer $r\ge 2$, we write $C_r(G)$ for the number of 
sequences $(x_1, x_2, \ldots, x_r)\in V(G)^r$ having the property that 
$x_1x_2, x_2x_3, \ldots, x_rx_1$ are edges of~$G$. 
\end{dfn}

\begin{thm}\label{Hauptsatz}
If a graph $G$ is $(\eps, d)$-dense and possesses at least $\frac 2{\eps-\eps^2}$ 
vertices, then we have $C_r(G)\ge (d^r-\eps)|V(G)|^r$
for each odd number $r\ge 3$.
\end{thm}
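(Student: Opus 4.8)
The plan is to work with the adjacency matrix $A$ of $G$ and to read $C_r(G)$ as the trace $\operatorname{tr}(A^r)$, that is, as the number of closed walks of length $r$. Writing $r=2s+1$ and letting $f_z=A^se_z$ be the (entrywise nonnegative) vector counting walks of length $s$ issuing from a vertex $z$, the symmetry of $A^s$ together with cyclicity of the trace gives
\[
C_r(G)=\operatorname{tr}(A^r)=\operatorname{tr}(A^s\cdot A\cdot A^s)=\sum_{z\in V(G)}f_z^{\mathsf T}Af_z .
\]
So it suffices to bound each quadratic form $f_z^{\mathsf T}Af_z=\sum_{x,y}A_{xy}(f_z)_x(f_z)_y$ from below and then to sum over $z$.

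The crux is a weighted version of the density hypothesis: for every $w\colon V(G)\to[0,\infty)$ one should have
\[
\sum_{x,y}A_{xy}w_xw_y\ \ge\ d\Bigl(\sum_x w_x\Bigr)^2-d\,\eps\,n\,\max_x w_x\sum_x w_x .
\]
I would prove this by a sampling argument: put $M=\max_x w_x$ and let $X$ be the random set obtained by placing each $x$ into $X$ independently with probability $w_x/M$. Then $\mathbb E\,\mathbbm 1_X^{\mathsf T}A\mathbbm 1_X=M^{-2}\sum_{x,y}A_{xy}w_xw_y$, while the hypothesis yields $\mathbbm 1_X^{\mathsf T}A\mathbbm 1_X\ge d|X|^2$ on the event $|X|\ge\eps n$ and $\ge 0$ always. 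Bounding the contribution of $|X|<\eps n$ by $d\eps n\,\mathbb E|X|$ and using $\mathbb E|X|^2\ge(\mathbb E|X|)^2$ gives the displayed inequality after multiplying through by $M^2$. This is the step I expect to be the main obstacle: the obvious alternative of slicing $w$ into level sets only controls the edges inside each level set and loses exactly the cross-edges between different levels, whereas the sampling trick retains them.

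Applying the lemma with $w=f_z$ and summing, the main term is $d\sum_z\bl\sum_x(f_z)_x\br^2=d\,\mathbf 1^{\mathsf T}A^{r-1}\mathbf 1$. To see this is at least $d^rn^r$ I would use the spectral expansion $\mathbf 1^{\mathsf T}A^k\mathbf 1=\sum_i\mu_i\lambda_i^k$ with $\mu_i\ge 0$ and $\sum_i\mu_i=n$: Jensen applied to $t\mapsto t^s$ gives $\mathbf 1^{\mathsf T}A^{2s}\mathbf 1\ge(\mathbf 1^{\mathsf T}A^2\mathbf 1)^s/n^{s-1}$, and $\mathbf 1^{\mathsf T}A^2\mathbf 1=\sum_v\deg(v)^2\ge(2e(G))^2/n\ge d^2n^3$ by Cauchy--Schwarz together with the density bound $e(G)\ge\tfrac d2 n^2$ coming from $X=V(G)$. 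Hence $\mathbf 1^{\mathsf T}A^{r-1}\mathbf 1\ge d^{r-1}n^r$ and the main term is $\ge d^rn^r$.

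It then remains to absorb the error $d\eps n\sum_z\|f_z\|_\infty\sum_x(f_z)_x$. Here I would use the crude bounds $\|f_z\|_\infty=\max_x(A^s)_{zx}\le n^{s-1}$ (a walk of length $s$ between two fixed endpoints is determined by its $s-1$ interior vertices) and $\sum_z\sum_x(f_z)_x=\mathbf 1^{\mathsf T}A^s\mathbf 1\le n^{s+1}$, so the error is at most $d\eps n\cdot n^{s-1}\cdot n^{s+1}=d\eps n^{r}\le\eps n^{r}$. Combining the two parts yields $C_r(G)\ge d^rn^r-\eps n^r=(d^r-\eps)n^r$, the hypothesis $n\ge\frac2{\eps-\eps^2}$ being exactly what is needed to legitimise the few lower-order corrections (replacing $\binom n2$ by $\tfrac12n^2$, and the $|X|<\eps n$ truncation). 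For $r=3$ the whole scheme collapses to the transparent identity $C_3(G)=2\sum_z e(N(z))$ combined with $\sum_v\deg(v)^2\ge d^2n^3$, which serves as a reassuring sanity check.
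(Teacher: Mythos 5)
Your proposal is correct, and although its skeleton agrees with the paper's --- both split a closed walk of odd length $r=2s+1$ at an apex vertex $z$ into two walks of length $s$ joined by an edge, your identity $C_r(G)=\sum_z f_z^{\mathsf T}Af_z$ with $f_z=A^se_z$ being exactly the paper's $C_r(G)=\sum_{(x,y,z)\in V^3;\, yz\in E}q(x,y)q(x,z)$ in matrix language --- the two key technical ingredients are obtained by genuinely different means. First, the weighted form of $(\eps,d)$-denseness: the paper's Lemma~\ref{f} applies to functions $f\colon V(G)\to[0,1]$ with $\sum_x f(x)\ge\eps n$ and carries the additive error $-n$, and is proved by a compactness-and-perturbation argument on an extremal function; your version holds unconditionally for arbitrary nonnegative weights, with error $d\eps n\max_xw_x\sum_xw_x$, and your sampling proof is sound (independence handles the off-diagonal terms, the diagonal vanishes since $G$ is loopless, and the truncation and Jensen steps work as stated). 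Second, the walk-count lower bound: the paper invokes the Blakley--Roy inequality (Lemma~\ref{Sido}) for walks of length $s$ and then squares via Cauchy--Schwarz, whereas you bound $\mathbf 1^{\mathsf T}A^{2s}\mathbf 1$ directly by the spectral decomposition together with Jensen applied to $t\mapsto t^s$ --- legitimate because after the squaring only even-length walk counts are needed, and these admit the easy eigenvalue proof --- so your argument is self-contained where the paper's rests on a citation. A pleasant by-product you appear not to have noticed: your total error is $d\eps n\cdot n^{s-1}\cdot n^{s+1}=d\eps n^r\le\eps n^r$ outright, so your proof never actually uses the hypothesis $n\ge\frac{2}{\eps-\eps^2}$ (your closing remark that it is ``exactly what is needed'' is inaccurate, but harmlessly so --- you prove a slightly stronger statement); the paper needs that hypothesis because its bookkeeping, with the error split as $d\eps^2n^{2m+1}+2n^{2m}$ over the vertices inside and outside the set $Z$, must be absorbed into $\eps n^{2m+1}$.
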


The proof is prepared in the next section by some lemma that has an analytic flavour, 
whilst the main proof is deferred to Section~\ref{sec:proof}. It uses an inequality 
due to {\sc G.~R. Blakley} and {\sc P.~A. Roy}~\cite{BR} that basically tells us that 
paths obey {\sc Sidorenko's} conjecture.

\begin{lemma}\label{Sido}
For any positive integer $k$ and any graph $G$ satisfying $|E(G)|\ge \tfrac d2\,|V(G)|^2$ 
there are at least $d^k|V(G)|^{k+1}$ homomorphisms from a path of length $k$ into $G$.
\end{lemma}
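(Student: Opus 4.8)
The plan is to prove Lemma~\ref{Sido}, which states that paths satisfy Sidorenko's conjecture: any graph $G$ with at least $\tfrac{d}{2}|V(G)|^2$ edges admits at least $d^k|V(G)|^{k+1}$ homomorphisms from a path of length $k$.

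Let me set up notation. Let $n = |V(G)|$ and let $A$ be the adjacency matrix of $G$, an $n \times n$ symmetric $0/1$ matrix. A homomorphism from a path of length $k$ (with $k+1$ vertices) into $G$ is precisely a sequence $(x_0, x_1, \ldots, x_k)$ of vertices with each consecutive pair adjacent. The number of such homomorphisms is exactly $\sum_{x_0, \ldots, x_k} A_{x_0 x_1} A_{x_1 x_2} \cdots A_{x_{k-1} x_k} = \mathbf{1}^\top A^k \mathbf{1}$, where $\mathbf{1}$ is the all-ones vector. The average degree satisfies $\bar{d} := \frac{1}{n}\sum_x d(x) = \frac{2|E(G)|}{n} \geq dn$, where $d(x)$ is the degree of vertex $x$. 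So the goal is to show $\mathbf{1}^\top A^k \mathbf{1} \geq d^k n^{k+1}$, and since $d^k n^{k+1} \leq (\bar d/n)^k n^{k+1} = \bar{d}^k n$, it would suffice to prove the cleaner bound $\mathbf{1}^\top A^k \mathbf{1} \geq \bar{d}^k\, n = n^{1-k}\bl\mathbf{1}^\top A \mathbf{1}\br^k$, i.e.\ that the path-homomorphism count is at least $n$ times the $k$-th power of the average degree.

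The approach I would take is induction on $k$ together with convexity, which is the essence of the Blakley--Roy inequality. Let $v = A\mathbf{1}$ be the degree vector, so $v_x = d(x)$. The number of homomorphisms of the length-$k$ path equals $\langle v, A^{k-1} \mathbf{1} \rangle$, and I would track the sequence of vectors $A^j \mathbf{1}$. The key observation is a Cauchy--Schwarz / power-mean step: because $A$ is symmetric, one can relate $\mathbf{1}^\top A^{2m}\mathbf{1}$ and $\mathbf{1}^\top A^{m}\mathbf{1}$, and more generally push the estimate along the path one edge at a time. Concretely, writing $N_k := \mathbf{1}^\top A^k \mathbf{1}$ for the homomorphism count, I expect a submultiplicativity-type inequality of the form $N_k\, N_{k-2} \geq N_{k-1}^2$ (log-convexity of the sequence $N_k$, which follows from Cauchy--Schwarz applied to $A^{(k-1)/2}$ in the symmetric case, or from a direct convexity argument), combined with the base cases $N_0 = n$ and $N_1 = \mathbf{1}^\top A \mathbf{1} = \bar{d}\, n \geq d n^2$. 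From $N_k N_{k-2} \geq N_{k-1}^2$ one deduces by induction that $N_k \geq N_1^k / N_0^{k-1} = (\bar d n)^k / n^{k-1} = \bar{d}^k\, n \geq d^k n^{k+1}$, which is exactly the claim.

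The main obstacle is establishing the log-convexity inequality $N_k N_{k-2} \geq N_{k-1}^2$ cleanly, and in particular handling the parity correctly so that the induction reaches every $k$ rather than only even or only odd indices. When $k-1$ is even this is immediate from Cauchy--Schwarz, since $N_{k} = \|A^{k/2}\mathbf 1\|^2$ and similar expressions let one write everything as inner products; the slightly more delicate case is odd $k-1$, where one must insert one copy of $A$ and use that $A$ is nonnegative (so $\langle u, A w\rangle$ behaves like a weighted inner product, or one simply applies Cauchy--Schwarz to the vectors $A^{(k-1)/2}\mathbf 1$ and $A^{(k+1)/2}\mathbf 1$ after noting $N_{2j} = \|A^j\mathbf 1\|^2$ and $N_{2j+1} = \langle A^j\mathbf 1,\, A\,A^j\mathbf 1\rangle$). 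Since the lemma is stated for an arbitrary positive integer $k$, I would organize the induction to step down by one each time using the single inequality $N_k N_{k-2}\ge N_{k-1}^2$, which holds for all $k\ge 2$ and covers both parities uniformly, with the two base values $N_0$ and $N_1$ seeding the recursion.
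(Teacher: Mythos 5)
Your reduction of Lemma~\ref{Sido} to the inequality $N_k\ge N_1^k/N_0^{k-1}$, where $N_j=\mathbf{1}^\top A^j\mathbf{1}$ counts homomorphisms of the path with $j$ edges, is correct, and that inequality is indeed the Blakley--Roy theorem which the paper invokes (the paper gives no proof of this lemma; it simply cites Blakley and Roy). But your proof of that inequality has a genuine gap: the log-convexity claim $N_kN_{k-2}\ge N_{k-1}^2$ is \emph{false} in general when $k$ is odd. Cauchy--Schwarz proves it only when the middle index $k-1$ is odd, since then $N_{k-1}=\langle A^{(k-2)/2}\mathbf{1},A^{k/2}\mathbf{1}\rangle$ while $N_{k-2}$ and $N_k$ are the squared norms of these two vectors. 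When the middle index is even there is no such splitting, and your suggested fix --- treating $\langle u,Aw\rangle$ as a weighted inner product because $A$ has nonnegative entries --- is unsound: entrywise nonnegativity does not make $A$ positive semidefinite, and the bilinear form $u^\top Aw$ does not satisfy Cauchy--Schwarz (for a single edge, with $u$, $w$ the two standard basis vectors, one has $u^\top Aw=1$ while $u^\top Au=w^\top Aw=0$).

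A concrete counterexample to your inequality is the path on three vertices: there $N_0=3$, $N_1=4$, $N_2=6$, $N_3=8$, so
\[
	N_3N_1=32<36=N_2^2\,.
\]
Thus the chain $N_k/N_{k-1}\ge N_{k-1}/N_{k-2}\ge\dots\ge N_1/N_0$ breaks at every even middle index, and your induction never reaches any odd $k\ge 3$ --- which is the only case the paper actually needs, since $m$ in the main proof corresponds to paths of arbitrary length. Note that the Blakley--Roy bound itself survives in this example ($8\ge 4^3/3^2$), so the counterexample kills only your route, not the lemma. This parity obstruction is precisely why Blakley--Roy is a nontrivial theorem: its proof is a H\"older-type induction that exploits the nonnegativity of the matrix entries and of the vector $\mathbf{1}$ in an essential way (alternatively one can argue via the Perron--Frobenius eigenvector or via entropy), rather than a chain of Cauchy--Schwarz steps applied to walk counts.
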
 

\section{More on denseness} \label{sec:ana}

In this section we prove that the property of being dense does more or less imply a 
weighted version of itself. A precise statement along those lines reads as follows:

\begin{lemma}\label{f}
Suppose that $G$ is an $(\eps, d)$-dense graph on $n$ vertices. 
Then for every function $f\colon V(G)\lra[0,1]$ satisfying 
$\sum_{x\in V(G)}f(x)\ge \eps n$ we have
\[
	\sum_{xy\in E(G)}f(x)f(y)\ge \tfrac d2\Bigl(\sum_{x\in V(G)}f(x)\Bigr)^2-n\,.
\]
\end{lemma}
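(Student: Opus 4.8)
The plan is to view the left-hand side as a quadratic function of the weights and to exploit that this function is concave in each coordinate; this will let me push $f$ to a $0/1$-valued function—that is, to the indicator of a vertex set—without ever increasing the quadratic, at which point the definition of $(\eps,d)$-denseness finishes the argument. Concretely, I set
\[
	\Phi(f)=\sum_{xy\in E(G)}f(x)f(y)-\tfrac d2\Bigl(\sum_{x\in V(G)}f(x)\Bigr)^2\,,
\]
so that the claim is precisely $\Phi(f)\ge -n$ for every $f\colon V(G)\to[0,1]$ with $\sum_x f(x)\ge\eps n$.

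First I would record two concavity facts. Freezing all values of $f$ but one, $\Phi$ is a quadratic in the remaining variable $f(x)$ whose leading coefficient is $-\tfrac d2\le0$, so $\Phi$ is concave along every coordinate direction. More usefully, if I raise $f(x)$ by $\theta$ and lower $f(y)$ by the same $\theta$ for two vertices $x,y$, then $\sum_x f(x)$ is unchanged, so the second term of $\Phi$ stays constant, while the only $\theta^2$-contribution to the first term is $-\theta^2$ times the indicator that $xy\in E(G)$; hence $\Phi$ is concave along every such mass-transfer direction as well, and these directions preserve the constraint $\sum_x f(x)\ge\eps n$ exactly.

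The main step is then a rounding procedure. As long as $f$ has two fractional values $f(x),f(y)$, I run the transfer move $f(x)\mapsto f(x)+\theta$, $f(y)\mapsto f(y)-\theta$ until one of the two reaches $\{0,1\}$; by concavity in $\theta$ the value $\Phi$ does not increase, the sum is preserved, and the number of fractional coordinates strictly drops. After finitely many steps I arrive at some $\tilde f=\mathbbm 1_X+\alpha\,\mathbbm 1_{\{z\}}$ having at most one fractional value $\alpha\in[0,1)$ at a vertex $z\notin X$, still with $\sum_x\tilde f(x)=|X|+\alpha\ge\eps n$ and $\Phi(\tilde f)\le\Phi(f)$. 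For the endgame I observe that, since $\alpha<1$, the set $Y=X\cup\{z\}$ satisfies $|Y|=|X|+1\ge\eps n$, so denseness applies to $Y$ (rather than to $X$, whose size might fall just short) and yields, writing $e(\cdot)$ for the number of spanned edges and $\deg_X(z)$ for the number of neighbours of $z$ in $X$, the bound $e(X)+\deg_X(z)=e(Y)\ge\tfrac d2(|X|+1)^2$. Substituting this into $\Phi(\tilde f)=e(X)+\alpha\deg_X(z)-\tfrac d2(|X|+\alpha)^2$ and simplifying gives
\[
	\Phi(\tilde f)\ge (1-\alpha)\Bigl[\tfrac d2\,(2|X|+1+\alpha)-\deg_X(z)\Bigr]\ge (1-\alpha)\,(d-1)\,|X|\ge -n\,,
\]
where the last estimates use $\deg_X(z)\le|X|\le n$, $d\le1$, and $0\le 1-\alpha\le 1$. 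Since $\Phi(f)\ge\Phi(\tilde f)$, this is the desired inequality.

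The conceptual crux—and the step I expect to demand the most care—is the concavity observation along transfer directions, because it is exactly what makes the rounding cost-free and reduces the weighted statement to the plain, definitional one. The rest is bookkeeping, the only delicate point being that one must invoke denseness for the slightly enlarged set $Y=X\cup\{z\}$ in order to land precisely at the stated error term $-n$.
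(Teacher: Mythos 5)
Your proof is correct, and it shares its key mechanism and its endgame with the paper's; the difference lies only in how the reduction is organized. The paper argues variationally: by compactness it picks a minimizer $f_0$ of your functional $\Phi$ (subject to the constraint, with a secondary maximality condition on the set of $0/1$-valued coordinates), and then uses exactly your transfer perturbations $f_0(x)+\eta$, $f_0(y)-\eta$ to derive a contradiction unless at most one coordinate is fractional; your concavity observation (the $\theta^2$-coefficient is $-1$ or $0$ according to whether $xy$ is an edge, while the constraint term is unaffected) is precisely what drives that contradiction. You instead run the same transfers as a finite, monotone rounding procedure on the given $f$, which buys elementarity: no compactness, no extremal choice, termination after at most $n$ steps. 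The endgames coincide: your $Y=X\cup\{z\}$ is the paper's set $A$, both proofs turn on the same point that denseness must be applied to this enlarged set because $|X|$ alone may fall below $\eps n$, and your algebra leading to $\Phi(\tilde f)\ge(1-\alpha)(d-1)|X|\ge -n$ is a correct variant of the paper's bound $\Omega\ge -N-1\ge -n$. One expository caveat in your rounding step: moving $\theta$ in a fixed direction until a value hits $\{0,1\}$ may well increase $\Phi$; concavity only guarantees that at least one of the two endpoints of the admissible $\theta$-interval has value at most $\Phi$ at $\theta=0$, so the procedure should be stated as moving to whichever endpoint that is. Since you established exactly the concavity needed to justify this choice, the issue is one of wording, not of substance.
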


\begin{proof}
1. The space of all functions $f\colon V(G)\lra [0,1]$ satisfying 
$\sum_{x\in V(G)}f(x)\ge \eps n$ is compact and consequently it contains 
a member $f_0$ for which the continuous expression
\[
	\sum_{xy\in E(G)}f(x)f(y)- \tfrac d2 \Bigl(\sum_{x\in V(G)}f(x)\Bigr)^2
\]
attains its least possible real value $\Omega$, and for which subject to this the set 
$X$ of all $x\in V(G)$ with $f(x)\in \{0,1\} $ has its maximal possible size. 
Evidently it suffices to prove $\Omega\ge -n$.

\medskip

2. As a first step in this direction we will verify $|V(G)-X|\le 1$. 
Assume contrariwise that there are two distinct vertices $x$ and $y$ belonging to $V(G)-X$. 
Let $\eta$ be any real number whose absolute value is so small that 
$f_0(x)\pm\eta, f_0(y)\pm\eta\in[0,1]$. 
For any two real numbers $a$ and $b$ we write $H(a,b)$ for the value that
\[
	\sum_{xy\in E(G)}f(x)f(y)- \tfrac d2 \Bigl(\sum_{x\in V(G)}f(x)\Bigr)^2
\]
attains for the function $f\colon V(G)\lra [0,1]$ given by
\[
	f(z)=\begin{cases}
			a & \text{ if } z=x\\
			b &\text{ if } z=y\\
			f_0(z) & \text{ if } z\ne x, y. 
  			\end{cases}
\]
So for example $H\bigl(f_0(x), f_0(y)\bigr)=\Omega$. 
If $xy$ were an edge of $G$, then there would have to exist real numbers 
$A$, $B$, $C$, and $T$ not depending on $\eta$ such that
\begin{align*}
	H\bl f_0(x)+\eta, f_0(y)-\eta\br 
	&=\bl f_0(x)+\eta\br \bl f_0(y)-\eta\br
	+A\bl f_0(x)+\eta\br + B\bl f_0(y)-\eta\br +C \\
	& =H\bl f_0(x), f_0(y)\br +\bl f_0(y)-f_0(x)+A-B\br \eta-\eta^2 \\
	& =\Omega+T\eta-\eta^2\,.
\end{align*}
In each of the three cases $T>0$, $T=0$, and $T<0$ it is easy to choose $\eta$ 
in such a way that $H\bl f_0(x)+\eta, f_0(y)-\eta\br <\Omega$ holds. 
As this contradicts the supposed minimality of $f_0$, we have thereby shown 
that $xy$ cannot be an edge of $G$. 
This, however, means that there exist three real numbers $A$, $B$, and $C$ not depending 
on $\eta$ such that
\begin{align*}
	H\bl f_0(x)+\eta, f_0(y)-\eta\br
	&=A\bl f_0(x)+\eta\br + B \bl f_0(y)-\eta\br +C \\
	& =H\bl f_0(x), f_0(y)\br +(A-B)\eta\,.
\end{align*}
Now the same contradiction arises as before unless $A=B$, in which case we have 
\[
	H\bl f_0(x)+\eta, f_0(y)-\eta\br = H\bl f_0(x), f_0(y)\br
\]
for any $\eta$. Thus taking 
\[
	\eta=\min\bl 1-f_0(x), f_0(y)\br
\]
we get a contradiction to the extremal choice of $X$. 
We have thereby learned that there is indeed some vertex $z\in V(G)$ with 
$V(G)-\{z\}\subseteq X$.

\medskip

3. Let us now define $\delta=f_0(z)$ and $A=\{x\in X\st f_0(x)=1\}\cup\{z\}$.
Evidently we have 
\[
	|A|\ge \sum_{x\in V(G)}f_0(x)\ge\eps n
\]
and, since $G$ is $(\eps, d)$-dense by hypothesis, 
this implies $e(A) \ge \tfrac 12\cdot d\,|A|^2$, 
where $e(A)$ refers to the number of edges spanned by $A$. 
Notice that 
\[
	\sum_{x\in V(G)}f_0(x)=|A|-(1-\delta)
\]
and
\[
	\sum_{xy\in E(G)}f_0(x)f_0(y)=e\bl A-\{z\}\br +\delta N=e(A)-(1-\delta)N\,,
\]
where $N$ denotes the number of edges from $z$ to $A-\{z\}$. 
Thereby we obtain indeed
\begin{align*}
	\Omega &= e(A) -(1-\delta)N-\tfrac d2 \bl |A|-(1-\delta)\br^2 \\
	&= \bl e(A)- \tfrac d2 |A|^2\br -(1-\delta)(N-d |A|)-\tfrac d2(1-\delta)^2 \\
	&\ge -N-1\ge -n\,,
\end{align*}
which concludes the proof. 
\end{proof}

\section{The proof of Theorem~\ref{Hauptsatz}} \label{sec:proof}

In this section we shall finally prove Theorem~\ref{Hauptsatz}. 
Write $r=2m+1$ with some positive integer $m$. Given two vertices $x$ and $y$ of 
$G=(V, E)$ we denote the number of sequences $(a_0, \ldots, a_m)\in V^{m+1}$ 
satisfying $a_0=x$, $a_0a_1, \ldots, a_{m-1}a_m\in E$, and $a_m=y$ by $q(x,y)$. 
Clearly we have $q(x,y)\le n^{m-1}$, where $n=|V|$, and
\[
C_r(G)=\sum_{(x, y, z)\in V^3; yz\in E}q(x, y)q(x, z)\,.
\]
Writing $Z$ for the set of all $x\in V$, for which $\sum_{y\in V} q(x, y)\ge\eps n^{m}$ 
holds, we obtain, by Lemma~\ref{f},
\begin{align*}
	C_r(G)&\ge 2n^{2m-2}\sum_{x\in Z}
			\sum_{yz\in E}\frac{q(x, y)}{n^{m-1}}\cdot\frac{q(x, z)}{n^{m-1}} \\
	&\ge \sum_{x\in Z} d\left(\sum_{y\in V} q(x, y)\right)^2-2n^{2m-1} \\
	&\ge d\sum_{x\in V}\left(\sum_{y\in V} q(x, y)\right)^2-d\eps^2n^{2m+1}-2n^{2m}\,. 
\end{align*}
Due to $n\ge\frac 2{\eps-\eps^2}$ we have $d\eps^2n+2\le\eps n$ and it follows that
\[
	C_r(G)\ge \frac dn\left(\sum_{x, y\in V}q(x, y)\right)^2-\eps n^{2m+1}\,. 
\]
In the light of Lemma~\ref{Sido} this entails
\begin{align*}
	C_r(G)&\ge \frac dn\left(\left(\frac{2|E|}{n^2}\right)^{m}
								\cdot n^{m+1}\right)^2-\eps n^{2m+1}\\
	&\ge d^{2m+1}n^{2m+1}-\eps n^{2m+1}=(d^r-\eps)n^r,
\end{align*}
thereby completing the proof of Theorem~\ref{Hauptsatz}. \hfill $\Box$

\begin{rem}
It should be clear that this method of proof allows to decide 
Question~\ref{Frage} in a few more cases.
\end{rem}

\subsection*{Acknowledgement} 
I would like to thank {\sc Mathias Schacht} for bringing 
Question~\ref{Frage} to my attention.

\begin{bibdiv}
\begin{biblist}
\bib{BR}{article}{
   author={Blakley, G. R.},
   author={Roy, Prabir},
   title={A H\"older type inequality for symmetric matrices with nonnegative
   entries},
   journal={Proc. Amer. Math. Soc.},
   volume={16},
   date={1965},
   pages={1244--1245},
   issn={0002-9939},
   review={\MR{0184950}},
}
	
\bib{KNRS}{article}{
   author={Kohayakawa, Yoshiharu},
   author={Nagle, Brendan},
   author={R{\"o}dl, Vojt{\v{e}}ch},
   author={Schacht, Mathias},
   title={Weak hypergraph regularity and linear hypergraphs},
   journal={J. Combin. Theory Ser. B},
   volume={100},
   date={2010},
   number={2},
   pages={151--160},
   issn={0095-8956},
   review={\MR{2595699}},
   doi={10.1016/j.jctb.2009.05.005},
}	

\bib{Sid}{article}{
   author={Sidorenko, Alexander},
   title={A correlation inequality for bipartite graphs},
   journal={Graphs Combin.},
   volume={9},
   date={1993},
   number={2},
   pages={201--204},
   issn={0911-0119},
   review={\MR{1225933}},
   doi={10.1007/BF02988307},
}
	
\bib{Sim}{article}{
   author={Simonovits, Mikl{\'o}s},
   title={Extremal graph problems, degenerate extremal problems, and
   supersaturated graphs},
   conference={
      title={Progress in graph theory},
      address={Waterloo, Ont.},
      date={1982},
   },
   book={
      publisher={Academic Press, Toronto, ON},
   },
   date={1984},
   pages={419--437},
   review={\MR{776819}},
}
		
\end{biblist}
\end{bibdiv}
\end{document}